\theoremstyle{plain}
\newtheorem{theorem}                 {Theorem}      [section]
\newtheorem{proposition}  [theorem]  {Proposition}
\newtheorem{corollary}    [theorem]  {Corollary}
\theoremstyle{definition}
\newtheorem{example}      [theorem]  {Example}
\newtheorem{remark}       [theorem]  {Remark}
\def \theo-intro#1#2 {\vskip .25cm\noindent{\bf Theorem #1\ }{\it #2}}
\numberwithin{equation}{section}
\def \g{\mathfrak{g}}
\def \k{\mathfrak{k}}
\def \h{\mathfrak{h}}
\def \m{\mathfrak{m}}
\def \p{\mathfrak{p}}
\def \L{\mathscr{L}} 
\def \d{\mathrm{d}}
\def \ip#1#2{\langle#1,#2\rangle}
\DeclareMathOperator{\volume}{\ast 1}
\DeclareMathOperator{\musicd}{\sharp}
\DeclareMathOperator{\musicu}{\flat}
\DeclareMathOperator{\intpr}{\lrcorner}
\DeclareMathOperator{\dive}{div}
\def \rn{\mathbb{R}}
\def \cn{\mathbb{C}}
\def \V{\mathcal{V}}
\def \H{\mathcal{H}}
\newcommand{\R}{{\mathbb{R}}}
\newcommand{\C}{{\mathbb{C}}}
\newcommand{\beq}{\begin{equation}} 
\newcommand{\eeq}{\end{equation}} 
\newcommand{\bea}{\begin{eqnarray}} 
\newcommand{\eea}{\end{eqnarray}} 
\newcommand{\ben}{\begin{eqnarray*}} 
\newcommand{\een}{\end{eqnarray*}} 
\newcommand{\ra}{\rightarrow}
\begin{document}\larger[2]\setlength{\baselineskip}{1.0\baselineskip}
\begin{footnotesize}
\begin{flushright}
CP3-ORIGINS-2010-10
\end{flushright}
\end{footnotesize}
\bigskip

\title{Some global minimizers of a symplectic Dirichlet energy}
\author{J.M.~Speight}
\subjclass[2000]{58E99, 81T99}
\address{School of Mathematics, University of Leeds, Leeds, LS2 9JT} 
\email{speight@maths.leeds.ac.uk }
\author{M.~Svensson}
\address{Department of Mathematics and Computer Science, and $\mathrm{CP}^3$-Origins, Centre of Excellence for Particle Physics Phenomenology, University of Southern Denmark, Campusvej 55, DK-5230 Odense M}
\email{svensson@imada.sdu.dk}

\begin{abstract} 
The variational problem for the functional 
$F=\frac12\int_M\|\varphi^*\omega\|^2$ is considered,
where $\varphi:(M,g)\ra (N,\omega)$ maps a 
Riemannian manifold
to a symplectic manifold. This functional arises
in theoretical physics as the strong coupling limit of the Faddeev-Hopf
energy, and may be regarded as a symplectic analogue of the Dirichlet
energy familiar from harmonic map theory.
The Hopf fibration $\pi:S^3\ra S^2$ is known to be a locally stable critical
point of $F$. It is proved here that $\pi$ in fact minimizes $F$ in its
homotopy class and this result is extended to the case where $S^3$ is 
given the metric of the Berger's sphere.
It is proved that if $\varphi^*\omega$ is coclosed then 
$\varphi$ is a critical point of $F$ and minimizes $F$ in its homotopy
class. If $M$ is a compact Riemann surface, it is proved that every
critical point of $F$ has $\varphi^*\omega $ coclosed. A family of
holomorphic homogeneous projections into Hermitian symmetric spaces is
constructed and it is proved that these too minimize $F$ in their
homotopy class. 
\end{abstract}

\maketitle

\section{Introduction}

Given a smooth map $\phi:M\ra N$ between Riemannian manifolds, a natural
notion of the energy of $\phi$ is the Dirichlet energy,
$$
E=\frac12\int_M\|\d\phi\|^2.
$$
The variational problem for $E$, whose critical points are
harmonic maps,  has been heavily studied for many years. If we replace the
metric on $N$ by a symplectic form $\omega$, a natural analogue of 
$E$ is
$$
F=\frac12\int_M\|\phi^*\omega\|^2,
$$
which one can regard as a kind of symplectic Dirichlet energy. In this 
paper we study the variational problem for $F$, focussing particularly
on the problem of obtaining global minimizers of $F$.

Our original motivation comes from theoretical physics. If 
$N$ is a K\"ahler manifold, both $E$ and $F$ are defined, and the 
variational problem for $E+\alpha F$, where $\alpha$ is a positive constant 
called the coupling constant, is known to physicists as the
Faddeev-Hopf (or Faddeev-Skyrme) model. The case of most interest is
$M=\R^3$, $N=S^2$. This variational problem, originally proposed by
Faddeev in the 1970s, lay dormant for lack of computational
power until the 1990s, but has been the subject of
intense numerical study in recent years
\cite{batsut,fadnie,hiesal,sut2}. Its critical points, which are
interpreted as topological solitons, have been proposed as models of
gluon flux tubes in hadrons (particles composed of quarks). 
There has been some analytic progress on this model too. Kapitanski and
Vakulenko proved a rather curious topological energy bound for 
maps $\R^3\ra S^2$ \cite{kapvak},
Kapitanski and Auckly proved weak existence (i.e.\ existence
in some Sobolev space with low regularity) of global minimizers in every
homotopy class of maps $M\ra S^2$, with $M$ a compact oriented 3-manifold
\cite{auckap},
and Ward obtained some exact results for the case $S^3\ra S^2$ \cite{war}.

The variational problem for $F$, which we shall consider here,
 can be interpreted as
the Faddeev-Hopf model in the strong coupling limit $\alpha\ra\infty$.
This has two key similarities with Yang-Mills theory: it is conformally
invariant in dimension 4 and it possesses an infinite dimensional
symmetry group (the symplectic diffeomorphisms of $N$). 
As in Yang-Mills, the most physically relevant choice of $M$ is $S^4$,
regarded as the conformal compactification of Euclideanized
spacetime $\R^4$. It is an interesting open question
whether there is a critical point of $F$ in the generator of
$\pi_4(S^2)$. Such a critical point would be interpreted as an
instanton in the strongly coupled Faddeev-Hopf model on Minkowski space.

This variational problem seems to have received remarkably little
attention. Ferreira and De Carli \cite{decfer} analyzed the case
$M=S^3\times\R$, with a Lorentzian metric, and $N=\C$, $S^2$ or the 
hyperbolic plane, working within a particular rotationally invariant
ansatz. The first systematic development of the variational calculus
was made in \cite{Speight-Svensson}.  We begin by briefly reviewing
some results from that paper. Throughout, all maps are smooth,
$(M,g)$ is a Riemannian manifold and $(N,\omega)$ is a symplectic manifold.
In the case where
$N$ is K\"ahler, $\omega$ is the K\"ahler form $\omega(\cdot,\cdot)=
h(J\cdot,\cdot)$, $h$ is the metric and $J$ is the almost complex structure.
The coderivative on differential forms will be denoted $\delta$.

\begin{theorem}\cite{Speight-Svensson}\label{first}  For any
variation $\varphi_t$ of $\varphi:M\ra N$ 
with variational vector field $X\in\Gamma(\varphi^{-1}TN)$ we have
$$\frac{d}{dt}F(\varphi_t)\big\vert_{t=0}=\int_M\omega(X,\d\varphi(\musicd\delta\varphi^*\omega))\volume.$$  
In particular, $\varphi$ is a critical point of $F$ if and only if
$$\d\varphi(\musicd\delta\varphi^*\omega)=0.$$  
\end{theorem}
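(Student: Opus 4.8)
The plan is to differentiate under the integral sign and reduce everything to the variation of the pulled-back $2$-form $\varphi_t^*\omega$. Since the pointwise norm on $2$-forms comes from an inner product $\ep$, we have $\frac{d}{dt}\tfrac12\|\varphi_t^*\omega\|^2=\ip{\varphi_t^*\omega}{\frac{\partial}{\partial t}\varphi_t^*\omega}$, so the only real work is to identify $\frac{\partial}{\partial t}\varphi_t^*\omega$ as an exact form and then integrate by parts.

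To compute the $t$-derivative of the pullback, I would assemble the variation into a single smooth map $\Phi:M\times(-\eps,\eps)\ra N$, $\Phi(x,t)=\varphi_t(x)$, and work with the $2$-form $\Phi^*\omega$. Writing $\partial_t$ for the coordinate field in the interval direction, Cartan's formula gives $\mathcal{L}_{\partial_t}\Phi^*\omega=\d(\partial_t\intpr\Phi^*\omega)+\partial_t\intpr \d\Phi^*\omega$, and the second term vanishes because $\d\Phi^*\omega=\Phi^*\d\omega=0$ by closedness of the symplectic form --- this is the point where the symplectic hypothesis enters. Let $j_t:M\hra M\times(-\eps,\eps)$ be the inclusion of the slice at time $t$. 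Applying $j_t^*$ and using $j_t^*\circ\d=\d\circ j_t^*$ together with $j_t^*(\mathcal{L}_{\partial_t}\Phi^*\omega)=\frac{\partial}{\partial t}\varphi_t^*\omega$, one obtains $\frac{\partial}{\partial t}\varphi_t^*\omega=\d\alpha_t$, where $\alpha_t$ is the $1$-form on $M$ given by $\alpha_t(v)=(\Phi^*\omega)(\partial_t,v)=\omega(X_t,\d\varphi_t(v))$, $X_t$ being the variational field at time $t$. I expect this to be the main step, and the thing to be careful about is the bookkeeping of the $\d t$-components of $\Phi^*\omega$ and the interchange of the slice restriction with the exterior derivative.

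With $\frac{\partial}{\partial t}\varphi_t^*\omega=\d\alpha_t$ in hand, evaluate at $t=0$, so $\alpha_0(v)=\omega(X,\d\varphi(v))$, and integrate by parts using that $\delta$ is the formal $L^2$-adjoint of $\d$ on the compact manifold $M$ (or for compactly supported variations):
$$\frac{d}{dt}F(\varphi_t)\Big\vert_{t=0}=\int_M\ip{\varphi^*\omega}{\d\alpha_0}\volume=\int_M\ip{\delta\varphi^*\omega}{\alpha_0}\volume.$$
Finally, rewriting the inner product of $1$-forms through the musical isomorphism, $\ip{\delta\varphi^*\omega}{\alpha_0}=\alpha_0(\musicd\delta\varphi^*\omega)=\omega\big(X,\d\varphi(\musicd\delta\varphi^*\omega)\big)$, which is the asserted formula.

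For the characterization of critical points, if $\varphi$ is critical then $\int_M\omega(X,\d\varphi(\musicd\delta\varphi^*\omega))\volume=0$ for every $X\in\Gamma(\varphi^{-1}TN)$. Testing against sections supported in small neighbourhoods forces $\omega_{\varphi(p)}\big(\cdot,\d\varphi_p(\musicd\delta\varphi^*\omega)\big)=0$ at every point $p$, and non-degeneracy of $\omega$ then yields $\d\varphi(\musicd\delta\varphi^*\omega)=0$; the converse direction is immediate from the first-variation formula.
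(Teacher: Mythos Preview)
Your argument is correct. Note, however, that the present paper does not supply its own proof of this theorem: it is quoted from \cite{Speight-Svensson} as background, so there is no in-paper proof to compare against. That said, your derivation via Cartan's formula on the extended map $\Phi:M\times(-\eps,\eps)\ra N$ is the standard route, and the identity $\frac{\partial}{\partial t}\varphi_t^*\omega=\d\alpha_t$ with $\alpha_t=\varphi_t^*(X_t\intpr\omega)$ that you obtain is precisely what the authors invoke later (under the name ``homotopy lemma'', citing \cite{Eells-Lemaire}) in the proof of Theorem~\ref{theorem:coclosed}. The integration by parts against $\delta$ and the appeal to non-degeneracy of $\omega$ for the Euler--Lagrange characterization are both sound.
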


\begin{theorem}\cite{Speight-Svensson} A Riemannian submersion 
$\varphi:M\ra N$ from a Riemannian manifold to a K\"ahler manifold 
is a
critical point of $F$ if 
and only if it has minimal fibres, i.e., if and only if it is a
harmonic map. 
\end{theorem}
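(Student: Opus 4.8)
The plan is to feed the critical point equation of Theorem~\ref{first} into O'Neill's formulae for a Riemannian submersion and show that it is equivalent to the vanishing of the mean curvature of the fibres, which is in turn the harmonicity condition. Write $TM=\mathcal{H}\oplus\mathcal{V}$ for the splitting into the horizontal subbundle $\mathcal{H}=(\ker\d\varphi)^{\perp}$ and the vertical subbundle $\mathcal{V}=\ker\d\varphi$, and fix a local orthonormal frame $\{X_a\}\cup\{V_\alpha\}$ of $M$ adapted to this splitting in which the $X_a$ are the horizontal lifts of a local orthonormal frame $\{E_a\}$ of $N$ (such a frame exists near any point). Since $\d\varphi$ restricts to a linear isometry $\mathcal{H}_p\to T_{\varphi(p)}N$ and annihilates $\mathcal{V}$, the equation $\d\varphi\bigl((\delta\varphi^*\omega)^{\sharp}\bigr)=0$ holds if and only if the horizontal part of $(\delta\varphi^*\omega)^{\sharp}$ vanishes, i.e.\ if and only if $(\delta\varphi^*\omega)(Z)=0$ for every horizontal vector field $Z$. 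One is thus led to compute, for a horizontal vector field $Z$ (which may be taken locally basic, both sides being tensorial in $Z$),
\[
(\delta\varphi^*\omega)(Z)=-\sum_a(\nabla_{X_a}\varphi^*\omega)(X_a,Z)-\sum_\alpha(\nabla_{V_\alpha}\varphi^*\omega)(V_\alpha,Z).
\]

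Evaluating these two sums is the computational heart of the proof; I would carry it out with the adapted frame, using throughout the identity $\varphi^*\omega(\,\cdot\,,\cdot\,)=h(J\,\d\varphi\,\cdot\,,\d\varphi\,\cdot\,)$, the vanishing of $\varphi^*\omega$ on vertical vectors, and O'Neill's expressions for $\nabla_{X_a}X_a$, $\nabla_{V_\alpha}V_\alpha$ and the mixed covariant derivatives in terms of the integrability tensor $A$ and the second fundamental form tensor $T$ of the fibres. The vertical sum collapses: since $\varphi^*\omega$ annihilates $\mathcal{V}$, only the horizontal parts of the $\nabla_{V_\alpha}V_\alpha$ contribute, and their sum $H:=\sum_\alpha(\nabla_{V_\alpha}V_\alpha)^{\mathcal{H}}$ is the mean curvature vector field of the fibres, so the vertical sum equals $\varphi^*\omega(H,Z)$. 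In the horizontal sum the $A$-tensor terms pair a vertical vector against $\varphi^*\omega$ and drop out (note also $A_X X=0$), while the remaining terms, via the O'Neill identity that $\d\varphi$ carries the horizontal part of the Levi-Civita connection of $M$ to the Levi-Civita connection of $N$, reassemble into the pull-back one-form $\varphi^*(\delta_N\omega)$ evaluated on $Z$. This gives
\[
(\delta\varphi^*\omega)(Z)=\bigl(\varphi^*(\delta_N\omega)\bigr)(Z)+\varphi^*\omega(H,Z),
\]
and now the K\"ahler hypothesis is used: $\delta_N\omega=0$, because the Hodge dual of the K\"ahler form is, up to a constant, a power of the closed form $\omega$ and hence is itself closed. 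Therefore $\varphi$ is a critical point of $F$ if and only if $\varphi^*\omega(H,Z)=h\bigl(J\,\d\varphi(H),\d\varphi(Z)\bigr)$ vanishes for every horizontal $Z$.

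Since $\d\varphi$ maps $\mathcal{H}$ onto $TN$, $h$ is nondegenerate and $J$ is an isomorphism, this vanishing is equivalent to $\d\varphi(H)=0$, and --- because $H$ is horizontal and $\d\varphi|_{\mathcal{H}}$ is injective --- to $H=0$, that is, to the fibres being minimal submanifolds of $M$. To close the chain of equivalences with harmonicity I would invoke the classical identity for the tension field of a Riemannian submersion, $\tau(\varphi)=-\sum_\alpha\d\varphi(\nabla_{V_\alpha}V_\alpha)=-\d\varphi(H)$ (its horizontal part vanishes), so $\varphi$ is harmonic if and only if $H=0$ as well. The only real obstacle I anticipate is the adapted-frame computation of the two displayed identities for $\delta\varphi^*\omega$: one must keep careful track of signs through O'Neill's formulae and through $\nabla\varphi^*\omega$, and in particular verify cleanly both that the $A$-tensor contributions cancel and that the surviving ``basic'' term is exactly $\varphi^*\delta_N\omega$. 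Everything surrounding that step is formal linear algebra on the splitting $TM=\mathcal{H}\oplus\mathcal{V}$.
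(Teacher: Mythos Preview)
The paper does not supply its own proof of this statement: it is quoted, without argument, from the earlier work \cite{Speight-Svensson}. Your approach is correct. The adapted-frame computation goes through exactly as you sketch: the vertical sum reduces to $\varphi^*\omega(H,Z)$ because $\varphi^*\omega$ annihilates vertical vectors, and in the horizontal sum the O'Neill relation $(\nabla_{X_a}X_b)^{\mathcal H}=$ horizontal lift of $\nabla^N_{E_a}E_b$ (together with $A_{X_a}X_a=0$) converts it into $\varphi^*(\delta_N\omega)(Z)$. The subsequent chain of equivalences via nondegeneracy of $\omega$, injectivity of $\d\varphi|_{\mathcal H}$, and $\tau(\varphi)=-\d\varphi(H)$ is standard and correctly invoked.

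It may be worth noting that, although the present paper does not prove this theorem, it later proves a related and more general formula: the Proposition on PHWC submersions and its Corollary~\ref{aci} compute $\delta\varphi^*\omega$ for any horizontally homothetic submersion to an almost Hermitian manifold, obtaining $\delta\varphi^*\omega=-\lambda^2\,\musicu\dive f$ in terms of the divergence of the associated $f$-structure. Your computation is essentially the Riemannian-submersion, K\"ahler-target specialisation of that formula, phrased via the mean curvature of the fibres rather than $\dive f$.
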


A well known harmonic Riemannian submersion is the Hopf
map $$\pi:S^3\to S^2,$$ where $S^3$ is the sphere in $\rn^4$ of
radius $1$, and $S^2$ the sphere in $\rn^3$ of
radius $1/2$. As a harmonic map, $\pi$ is well known to be
\emph{unstable}, as are all harmonic maps from $S^3$ \cite{Xin}. We
studied in \cite{Speight-Svensson} the second variation of $F$ for
maps into K\"ahler manifolds, and found the associated Jacobi
operator.
 
\begin{theorem}\cite{Speight-Svensson} Let $N$ be K\"ahler and
$\varphi:M\to N$ be a critical point of
$F$. Then the Hessian of $\varphi$ is
$$H_\varphi(X,Y)=\int_Mh(X,\L_\varphi Y)\volume,$$ where
$$\L_\varphi Y=-J\{\nabla^{\varphi}_{Z_\varphi}Y+\d\varphi[\musicd\delta\d\,
\varphi^*(Y\intpr\omega)]\}\quad\text{and}\quad
Z_\varphi=\musicd\delta\varphi^*\omega.$$
\end{theorem}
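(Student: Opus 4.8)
The plan is to differentiate the first variation formula of Theorem~\ref{first} once more. Let $\varphi_{s,t}$ be a smooth two-parameter variation of $\varphi=\varphi_{0,0}$, put $X=\partial_s\varphi_{s,0}|_{s=0}$, $Y=\partial_t\varphi_{0,t}|_{t=0}$, and recall that for a critical $\varphi$ the Hessian $H_\varphi(X,Y)=\partial_s\partial_t F(\varphi_{s,t})|_{s=t=0}$ is well defined. Writing $\varphi_s:=\varphi_{s,0}$, $Y_s:=\partial_t\varphi_{s,t}|_{t=0}\in\Gamma(\varphi_s^{-1}TN)$ and $Z_{\varphi_s}:=\musicd\delta\varphi_s^*\omega$, Theorem~\ref{first} applied to the variation $t\mapsto\varphi_{s,t}$ of $\varphi_s$ gives, for each fixed $s$,
$$\frac{\partial}{\partial t}F(\varphi_{s,t})\Big|_{t=0}=\int_M\omega\bigl(Y_s,\d\varphi_s(Z_{\varphi_s})\bigr)\volume,$$
and $H_\varphi(X,Y)$ is the $s$-derivative of this at $s=0$. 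Two facts do most of the work: since $N$ is K\"ahler, $\nabla\omega=0$, so all terms arising from differentiating the appearance of $\omega$ along $\varphi_s$ vanish; and since $\varphi$ is critical, $\d\varphi(Z_\varphi)=0$, which kills the term in which $\partial_s$ lands on $Y_s$ --- this is also why $H_\varphi$ does not depend on the chosen extension of $X$ and $Y$.

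The key step is a formula for the variation of the pulled-back symplectic form. Consider $\Phi:M\times(-\eps,\eps)\to N$, $\Phi(p,s)=\varphi_s(p)$. Since $\d\omega=0$, $\Phi^*\omega$ is closed; writing $\Phi^*\omega=\alpha_s+\d s\wedge\gamma_s$ with $\alpha_s=\varphi_s^*\omega$ and $\gamma_s\in\Omega^1(M)$ given by $\gamma_s(V)=\omega(\partial_s\varphi_s,\d\varphi_s(V))$, the identity $\d(\Phi^*\omega)=0$ forces $\partial_s\alpha_s=\d\gamma_s$. At $s=0$ this reads $\partial_s(\varphi_s^*\omega)|_{s=0}=\d[\varphi^*(X\intpr\omega)]$, where $\varphi^*(X\intpr\omega)$ denotes the $1$-form $V\mapsto\omega(X,\d\varphi(V))$ on $M$, i.e.\ the pullback by $\d\varphi$ of the $1$-form $X\intpr\omega$ along $\varphi$ (\emph{not} $X\intpr\varphi^*\omega$). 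Because $\delta$ and the musical isomorphisms on $M$ depend only on the fixed metric $g$, differentiating $Z_{\varphi_s}=\musicd\delta\varphi_s^*\omega$ in $s$ gives $\partial_sZ_{\varphi_s}|_{s=0}=\musicd\delta\d\,\varphi^*(X\intpr\omega)$.

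Now differentiate $\int_M\omega(Y_s,\d\varphi_s(Z_{\varphi_s}))\volume$ at $s=0$; the volume form of $(M,g)$ is fixed, so only the integrand matters. Differentiating $\omega(Y_s,\d\varphi_s(Z_{\varphi_s}))$ covariantly in $s$ produces three terms: one containing $\nabla^\varphi_{\partial_s}\omega=0$ (as $N$ is K\"ahler), one equal to $\omega(\nabla^\varphi_XY,\d\varphi(Z_\varphi))=0$ (by criticality), and $\omega\bigl(Y,\nabla^\varphi_X(\d\varphi_s(Z_{\varphi_s}))|_{s=0}\bigr)$. For the last, write $Z_{\varphi_s}=Z_\varphi+s\,\dot Z+O(s^2)$ with $\dot Z=\musicd\delta\d\,\varphi^*(X\intpr\omega)$ from the previous paragraph, and use the standard identity $\nabla^\varphi_{\partial_s}(\d\Phi(V))=\nabla^\varphi_V(\d\Phi(\partial_s))$ for a vector field $V$ on $M$ to get $\nabla^\varphi_X(\d\varphi_s(Z_{\varphi_s}))|_{s=0}=\nabla^\varphi_{Z_\varphi}X+\d\varphi(\dot Z)$. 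Hence
$$H_\varphi(X,Y)=\int_M\omega\Bigl(Y,\ \nabla^\varphi_{Z_\varphi}X+\d\varphi\bigl[\musicd\delta\d\,\varphi^*(X\intpr\omega)\bigr]\Bigr)\volume.$$

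It remains to recast this in the stated form. Set $\beta_Z:=\varphi^*(Z\intpr\omega)$. Then $\omega(Y,\d\varphi(\musicd\delta\d\beta_X))=\ip{\beta_Y}{\delta\d\beta_X}$ pointwise, so $\int_M\omega(Y,\d\varphi[\musicd\delta\d\beta_X])\volume=\int_M\ip{\d\beta_Y}{\d\beta_X}\volume$ is symmetric in $X\leftrightarrow Y$; and since $\nabla\omega=0$ and $\dive Z_\varphi=0$ (because $\delta^2=0$), integrating $Z_\varphi(\omega(X,Y))$ over the closed $M$ shows $\int_M\omega(Y,\nabla^\varphi_{Z_\varphi}X)\volume=\int_M\omega(X,\nabla^\varphi_{Z_\varphi}Y)\volume$ (equivalently, invoke symmetry of the Hessian at a critical point). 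Thus $H_\varphi(X,Y)=\int_M\omega(X,\,\nabla^\varphi_{Z_\varphi}Y+\d\varphi[\musicd\delta\d\,\varphi^*(Y\intpr\omega)])\volume$, and since $\omega(\cdot,\cdot)=h(J\cdot,\cdot)$ with $J$ skew-adjoint for $h$ we have $h(X,J\,W)=-\omega(X,W)$, so the integrand equals $h(X,-J\{\nabla^\varphi_{Z_\varphi}Y+\d\varphi[\musicd\delta\d\,\varphi^*(Y\intpr\omega)]\})=h(X,\L_\varphi Y)$, as claimed. I expect the main obstacle to be the covariant-derivative bookkeeping along $\Phi$ --- in particular applying the symmetry identity cleanly to the $s$-dependent vector field $Z_{\varphi_s}$ --- together with sign discipline in the final musical manipulation and keeping the interpretation of $\varphi^*(X\intpr\omega)$ consistent throughout.
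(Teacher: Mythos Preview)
The paper does not prove this theorem; it is merely quoted from the authors' earlier work \cite{Speight-Svensson}, so there is no ``paper's own proof'' to compare against. That said, your argument is correct and is exactly the standard route: differentiate the first variation formula, use the homotopy lemma $\partial_s\varphi_s^*\omega=\d\,\varphi^*(X\intpr\omega)$ together with $\nabla\omega=0$ and criticality to isolate the surviving term, then symmetrize and convert $\omega$ to $h$ via $J$. Two minor points of hygiene: your notation $\nabla^\varphi_X(\d\varphi_s(Z_{\varphi_s}))$ should really be $\nabla^\Phi_{\partial_s}(\d\Phi(Z_{\varphi_s}))$, and the ``standard identity'' you invoke holds only for $s$-independent $V$, so the honest computation is the torsion-free identity $\nabla^\Phi_{\partial_s}\d\Phi(V)-\nabla^\Phi_V\d\Phi(\partial_s)=\d\Phi([\partial_s,V])$ applied with $V=Z_{\varphi_s}$, giving the extra $\d\varphi(\dot Z)$ term directly---your Taylor-expansion workaround reaches the same conclusion but is a little roundabout. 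Also, the symmetry/integration-by-parts step tacitly assumes $M$ is closed (or compactly supported variations), which you should state.
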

By a careful calculation of the spectrum of this operator for the Hopf
map, we proved the following result, which was
conjectured by Ward \cite{war}.
\begin{theorem}\label{as}\cite{Speight-Svensson} The
Hopf map $\pi:S^3\to S^2$ is stable for the full Faddeev-Hopf
functional $E+\alpha F$  if and only if $\alpha\geq 1$. 
\end{theorem}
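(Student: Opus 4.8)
The plan is to compute the full Hessian of $E+\alpha F$ at the Hopf map $\pi$ and to characterise exactly when it is positive semi-definite. Since the Hessian is additive in the functional, $H^{E+\alpha F}_\pi=H^E_\pi+\alpha H^F_\pi$, and by the Hessian theorem quoted above $H^F_\pi(X,Y)=\int_{S^3}h(X,\L_\pi Y)\volume$ with $Z_\pi=\musicd\delta\pi^*\omega$ and $\L_\pi Y=-J\{\nabla^\pi_{Z_\pi}Y+\d\pi[\musicd\delta\d\,\pi^*(Y\intpr\omega)]\}$, while $H^E_\pi(X,Y)=\int_{S^3}h(X,J^E_\pi Y)\volume$, where $J^E_\pi$ is the classical harmonic-map Jacobi operator $J^E_\pi Y=\Delta^\pi Y-\sum_i R^{S^2}(Y,\d\pi(e_i))\d\pi(e_i)$ and $\Delta^\pi$ the non-negative rough Laplacian on $\pi^{-1}TS^2$. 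Thus stability of $E+\alpha F$ at $\pi$ is equivalent to non-negativity of the elliptic self-adjoint operator $J^E_\pi+\alpha\L_\pi$ on $\Gamma(\pi^{-1}TS^2)$, and everything reduces to locating its spectrum.

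To this end I would work in the model $S^3=\SU 2$ with a left-invariant orthonormal coframe $e^1,e^2,e^3$ in which $e^3$ is dual to the Hopf vector field, so that the horizontal distribution $\H=\span\{e_1,e_2\}$ is identified via $\d\pi$ with $\pi^{-1}TS^2$, a complex line bundle whose complex structure is the pullback $J$ of the one on $\CP^1$. A section then becomes a complex function $\zeta=a+ib$ representing $ae_1+be_2$, with $JY\leftrightarrow i\zeta$, and each piece of $\L_\pi$ can be written out explicitly: $\pi^*\omega$ is a constant multiple of the horizontal area form $e^1\wedge e^2$, so $\delta\pi^*\omega$ is a constant multiple of $e^3$ and $Z_\pi$ is a constant multiple of the Hopf vector field; $\nabla^\pi_{Z_\pi}$ becomes a first order operator in the vertical direction; and $\d\pi[\musicd\delta\d\,\pi^*(Y\intpr\omega)]$ unwinds, after pulling the $1$-form $Y\intpr\omega$ up to $S^3$, applying $\d$ and $\delta$ there and pushing the result back down, into a second order horizontal operator. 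Collecting terms, $\L_\pi$ takes the shape of a (twisted) horizontal Laplacian plus a multiple of the vertical derivative plus a zeroth order term, with explicit constants; particular care is needed with the structure constants of $\su 2$ and with the radius $1/2$ of the target.

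Finally I would diagonalise $J^E_\pi+\alpha\L_\pi$ by harmonic analysis. Both $E$ and $F$ are invariant under the isometries of $S^3$, and $\pi$ intertwines the $\SU 2$-action on $S^3$ with an $SO(3)$-action on $S^2$ while being invariant under the right $\U 1$-action on the Hopf fibres, so the Hessian is equivariant for an $\SU 2\times\U 1$-action on $\Gamma(\pi^{-1}TS^2)$. By the Peter--Weyl theorem the $L^2$-sections decompose into irreducibles with multiplicities controlled by the Hopf weight of $TS^2$, and on each summand $J^E_\pi$ and $\L_\pi$ act by explicit scalars, so the eigenvalues of the Hessian are all of the form $\lambda^E_k+\alpha\lambda^F_k$. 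The remaining work is the arithmetic of tabulating $\lambda^E_k$ and $\lambda^F_k$: one checks that $\lambda^F_k\ge 0$ for every $k$ (recovering the known local stability of $\pi$ for $F$), one isolates the finitely many modes with $\lambda^E_k<0$ (the harmonic-map unstable directions, which must exist by \cite{Xin}), and one verifies that every mode with $\lambda^F_k=0$ has $\lambda^E_k\ge 0$ while $\sup\{-\lambda^E_k/\lambda^F_k:\lambda^F_k>0\}=1$, attained at one distinguished low-lying mode. This gives non-negativity of $J^E_\pi+\alpha\L_\pi$ precisely for $\alpha\ge 1$. I expect the main obstacle to be pinning down $\L_\pi$ in the second step, since that calculation chains together pullbacks, $\d$, $\delta$ and $-J$ on two different manifolds with many opportunities for sign and normalisation errors; once the operator is correct, the spectral bookkeeping with $\SU 2$-representations is routine and the threshold $\alpha=1$ emerges.
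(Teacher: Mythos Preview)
The present paper does not itself prove this theorem; it is quoted from \cite{Speight-Svensson}, and the only indication of the argument is the sentence preceding the statement: ``By a careful calculation of the spectrum of this operator for the Hopf map, we proved the following result.'' Your proposal is exactly an outline of that spectral calculation---writing down the combined Jacobi operator $J^E_\pi+\alpha\L_\pi$, identifying sections of $\pi^{-1}TS^2$ with weighted functions on $\SU 2$, decomposing under the $\SU 2\times\U 1$ symmetry so that both operators act by scalars on each isotypic piece, and reading off the threshold $\alpha=1$ from the resulting list of eigenvalues---and so agrees with the method the paper attributes to \cite{Speight-Svensson}.
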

In particular, the Hopf map is a stable critical point of $F$.
In this paper, we strengthen
this result to show that the Hopf map in fact \emph{minimizes}
$F$ in its homotopy class. The same is true for the Hopf map from the
Berger's spheres $$\pi:(S^3,g_t)\to S^2,$$ for all $0<t\leq1$; see Example \ref{ex:berger} for the definition of Berger's spheres. It is interesting
to note that a slightly stronger version of Theorem \ref{as} (namely, that
$\pi$ is a {\em local} minimizer of $E+\alpha F$ when $\alpha>1$) was obtained 
independently by Isobe \cite{iso} using rather different methods. It
remains an open question whether the Hopf map {\em globally} 
minimizes $E+\alpha F$
for some $\alpha$. 

As proved in \cite{Speight-Svensson}, we have 
$$H_\varphi(X,X)=\int_M\omega(X,\nabla^\varphi_{Z_\varphi}X)\volume
+\|\d\varphi^*(X\intpr\omega)\|^2_{L^2}\qquad(X\in\Gamma(\varphi^{-1}TN).$$  
In particular, when $\varphi^*\omega$ is coclosed (so $Z_\varphi=0$), 
$\varphi$ is
a stable critical point of $F$. 
In this paper, we strengthen this by showing that, if
$\varphi^*\omega$ is coclosed, then $\varphi$ actually minimizes $F$
in its homotopy class.

\section{Global minimizers}

Denote by $S^3$ the unit sphere in $\rn^4$ of radius 
$1$, and by $S^2$ the sphere in $\rn^3$ of  radius $1/2$
.
\begin{theorem} The Hopf map $\pi:S^3\to S^2$ minimizes $F$ in its
homotopy class.
\end{theorem}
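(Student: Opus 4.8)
The plan is to exhibit a topological (homotopy-invariant) lower bound for $F$ on the homotopy class of $\pi$, and then to check that $\pi$ saturates it. Since the Hopf map generates $\pi_3(S^2)\cong\Z$ via the Hopf invariant, and the Hopf invariant of a map $\varphi:S^3\to S^2$ can be computed by writing $\varphi^*\omega=\d\alpha$ (this is possible since $H^2(S^3)=0$) and setting $\mathcal{H}(\varphi)=\int_{S^3}\alpha\wedge\varphi^*\omega$, the natural idea is to bound $\int_{S^3}\alpha\wedge\d\alpha$ from above by something controlled by $\|\varphi^*\omega\|^2_{L^2}=\|\d\alpha\|^2_{L^2}$. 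Concretely, I would choose $\alpha$ to be coexact (the unique coexact primitive, i.e. $\alpha=\delta\beta$ for some $\beta$, equivalently $\delta\alpha=0$), so that $\alpha$ is orthogonal to harmonic forms and we may apply a Poincar\'e-type / eigenvalue inequality relating $\|\alpha\|_{L^2}$ to $\|\d\alpha\|_{L^2}$ on $S^3$. Then Cauchy--Schwarz gives
\[
|\mathcal{H}(\varphi)| \;=\; \left| \int_{S^3}\alpha\wedge\varphi^*\omega \right| \;\le\; \|\alpha\|_{L^2}\,\|\varphi^*\omega\|_{L^2} \;\le\; C\,\|\varphi^*\omega\|_{L^2}^2 \;=\; 2C\,F(\varphi),
\]
where $C$ is determined by the smallest eigenvalue of the relevant Laplacian on coexact $1$-forms on the round $S^3$. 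This yields $F(\varphi)\ge \tfrac{1}{2C}|\mathcal{H}(\varphi)|$ for every $\varphi$ in the homotopy class, a homotopy-invariant bound.

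The second half is to compute $F(\pi)$ and $\mathcal{H}(\pi)$ explicitly and verify that equality holds. For the Hopf map, $\pi^*\omega$ is (a constant multiple of) the standard contact form's exterior derivative, and both $\|\pi^*\omega\|_{L^2}^2$ and $\int_{S^3}\alpha_\pi\wedge\pi^*\omega$ are straightforward integrals over $S^3$ with its round metric of radius $1$; the point is that for $\pi$ the coexact primitive $\alpha_\pi$ is itself an eigenform realizing the extremal eigenvalue, so Cauchy--Schwarz and the eigenvalue inequality are simultaneously sharp. I would carry this out by recalling that on the unit $S^3$ the coexact $1$-forms split into eigenspaces of $*\d$ with eigenvalues $\pm(k+1)$, $k\ge1$, and that $\alpha_\pi$ sits in the lowest one; checking the numerology then gives $F(\varphi)\ge F(\pi)$ with equality for $\pi$.

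The main obstacle I anticipate is getting the sharp constant $C$ correct and identifying precisely which eigenvalue governs the bound — that is, proving that no coexact $1$-form does better than $\alpha_\pi$ in the ratio $\big(\int\alpha\wedge\d\alpha\big)\big/\|\d\alpha\|_{L^2}^2$. This is really a spectral-geometry statement about $S^3$: one wants $\sup_{\alpha}\frac{\int_{S^3}\alpha\wedge\d\alpha}{\int_{S^3}\alpha\wedge*\d\alpha + \ldots}$, reducible to diagonalizing $*\d$ on coexact $1$-forms. A clean way to organize this is to decompose an arbitrary coexact $\alpha$ into the $(+)$ and $(-)$ eigenspaces of $*\d$, note that $\int\alpha\wedge\d\alpha$ is a difference of the squared norms of the two pieces weighted by their eigenvalues while $\|\d\alpha\|_{L^2}^2$ is a sum weighted by squared eigenvalues, and then optimize; the extremum lands on the lowest eigenspace, which contains $\alpha_\pi$. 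A subtlety to handle carefully is that $\varphi^*\omega$ need only be $L^2$ and closed, not smooth or coexact, so the primitive $\alpha$ should be taken as the coexact solution of $\d\alpha=\varphi^*\omega$ in the appropriate Sobolev completion, and one must confirm $\mathcal{H}(\varphi)=\int\alpha\wedge\varphi^*\omega$ still computes the (integer) Hopf invariant in that generality. Once the sharp eigenvalue inequality is in hand, the rest is bookkeeping. The extension to the Berger spheres $(S^3,g_t)$, $0<t\le1$, would then follow by redoing the eigenvalue computation for the rescaled metric and checking $\alpha_\pi$ remains extremal — I expect this to be routine given the homogeneity of the Berger metrics under the Hopf $S^1$-action.
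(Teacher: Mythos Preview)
Your proposal is correct and follows essentially the same route as the paper: choose the coexact primitive $\alpha$ of $\varphi^*\omega$, combine Cauchy--Schwarz on $H(\varphi)=\int\alpha\wedge\d\alpha$ with the spectral inequality $\|\alpha\|_{L^2}^2\le\lambda_1^{-1}\|\d\alpha\|_{L^2}^2$ for coexact $1$-forms (the paper simply quotes $\lambda_1=4$ from Gallot--Meyer rather than diagonalizing $\ast\d$), and then verify saturation for $\pi$ via the identity $\d\ast\pi^*\omega=2\pi^*\omega$. Your worries about Sobolev regularity are unnecessary here since all maps are assumed smooth, and your more elaborate optimization over the $\pm$ eigenspaces of $\ast\d$ is not needed---the cruder Laplacian bound already gives the sharp constant.
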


The proof makes use of the \emph{Hopf invariant} of a map
$\varphi:S^3\to S^2$.
Recall that this is defined as the number 
$$H(\varphi)=\int_{S^3}\d\alpha\wedge\alpha,$$ where $\d\alpha=\varphi^*\omega$, and
$\omega$ is the volume form of $S^2$. It is well known that this is
 independent of the choice of $\alpha$, and 
depends only on the homotopy class of $\varphi$, see e.g. \cite{Bott-Tu}.

\begin{proof} As above, assume that $\varphi:S^3\to S^2$ is any map and
write $\varphi^*\omega=\d\alpha$. By the Hodge decomposition, we may assume 
that
$\alpha$ is coexact. Then
$$F(\varphi)=\frac{1}{2}\ip{\d\alpha}{\d\alpha}_{L^2}=
\frac{1}{2}\ip{\alpha}{\Delta\alpha}_{L^2}\geq\frac{\lambda_1}{2}\|\alpha\|_{L^2}^2,$$
where $\lambda_1$ is the first  eigenvalue of the Hodge-Laplace
operator on coexact $1$-forms on $S^3$. It is known that $\lambda_1=4$, see
e.g., \cite[page 270]{Gallot-Meyer}. Hence
$$\|\alpha\|^2_{L^2}\leq\frac{1}{2}F(\varphi).$$ By Cauchy-Schwarz,
$$H(\varphi)\leq\|\varphi^*\omega\|_{L^2}\|\alpha\|_{L^2}\leq\frac{1}{\sqrt{2}}\|\varphi^*\omega\|_{L^2}\sqrt{F(\varphi)}=F(\varphi).$$   

For the Hopf map $\pi:S^3\to S^2$, it is well known that (see
e.g.\ \cite[page 102]{Gilkey})
$$\d\ast\pi^*\omega=2\pi^*\omega,$$ so that
$$H(\pi)=F(\pi).$$ Thus, if $\varphi$ and $\pi$ are
homotopic, then $$F(\pi)=H(\pi)=H(\varphi)\leq F(\varphi).$$
\end{proof}

\begin{example}\label{ex:berger} Consider again the Hopf map $\pi:S^3\to S^2$. We
may write the metric $g$ on $S^3$ as $$g=g_\V+g_\H,$$ where $\V$ is
the distribution tangent to the fibres of $\pi$, and $\H$ its
orthogonal complement. For $0<t<1$, the 3-dimensional \emph{Berger's sphere} is the
Riemannian manifold $(S^3,g_t)$, where $$g_t=t^2g_\V+g_\H.$$ 

It is easy to see that $$\d\ast\pi^*\omega=2t\pi^*\omega.$$
For $0<t<1$, a simple calcuation shows that the sectional curvature of
$g_t$ is bounded below by $t^2$. Hence, the minimal eigenvalue
for the Hodge-Laplace operator on coexact 1-forms with respect to
$g_t$ is bounded below by $4t^2$, see \cite[page 270]{Gallot-Meyer}. 

Thus, for any map $\varphi$ homotopic to $\pi$, a calculation similar
to that above gives $$F(\pi)=tH(\pi)=tH(\varphi)\leq F(\varphi).$$
So $\pi$ still minimizes $F$ in its homotopy class, as a map
from the Berger's sphere to $S^2$. 
\end{example}

Next we consider another class of maps which minimize $F$.

\begin{theorem}\label{theorem:coclosed}\label{phi}
Let $M$ be compact and $\varphi:M\ra N$ have $\varphi^*\omega$ coclosed.
 Then $\varphi$ minimizes $F$ in its homotopy class. 
\end{theorem}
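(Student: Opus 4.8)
The plan is to exhibit a lower bound for $F$ that depends only on the cohomology class of $\varphi^*\omega$, hence only on the homotopy class of $\varphi$, and then to show that this bound is attained precisely when $\varphi^*\omega$ is coclosed. Since $\varphi^*\omega$ is always closed, the de Rham class $[\varphi^*\omega]\in H^2_{dR}(M)$ is a homotopy invariant of $\varphi$. On a compact Riemannian manifold, Hodge theory tells us that within this fixed cohomology class there is a unique harmonic (hence coclosed) representative $\beta$, and that $\beta$ is the $L^2$-orthogonal projection of $\varphi^*\omega$ onto the harmonic forms; equivalently, $\varphi^*\omega=\beta+\d\gamma$ with $\d\gamma\perp\beta$ in $L^2$.

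From this orthogonal decomposition, I would compute
$$F(\varphi)=\tfrac12\|\varphi^*\omega\|_{L^2}^2=\tfrac12\|\beta\|_{L^2}^2+\tfrac12\|\d\gamma\|_{L^2}^2\geq\tfrac12\|\beta\|_{L^2}^2.$$
The right-hand side depends only on $[\varphi^*\omega]$, so it is a common lower bound for $F$ over the entire homotopy class of $\varphi$. Now suppose $\varphi$ itself has $\varphi^*\omega$ coclosed; then $\varphi^*\omega$ is both closed and coclosed, hence harmonic, hence equals the unique harmonic representative $\beta$ of its class. Therefore $F(\varphi)=\tfrac12\|\beta\|_{L^2}^2$, which is exactly the lower bound. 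Consequently, for any $\psi$ homotopic to $\varphi$ we have $\psi^*\omega$ in the same cohomology class, so $F(\psi)\geq\tfrac12\|\beta\|_{L^2}^2=F(\varphi)$, proving that $\varphi$ minimizes $F$ in its homotopy class.

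I expect no serious obstacle here: the only points requiring care are the homotopy invariance of $[\varphi^*\omega]$ (which follows from the standard fact that homotopic maps induce the same map on de Rham cohomology, together with $\omega$ being closed) and the uniqueness of the harmonic representative in a fixed class on a compact manifold (standard Hodge theory). One should also note in passing that such a $\varphi$ is automatically a critical point of $F$, by Theorem \ref{first}, since $\delta\varphi^*\omega=0$ forces $Z_\varphi=\musicd\delta\varphi^*\omega=0$ and hence $\d\varphi(\musicd\delta\varphi^*\omega)=0$; this is consistent with the minimizing property but is not needed for the proof of minimality itself. The argument is essentially the same Hodge-theoretic bound used for the Hopf map above, but phrased via the cohomology class of $\varphi^*\omega$ rather than the Hopf invariant.
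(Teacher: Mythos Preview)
Your argument is correct. It is, however, not the paper's \emph{main} proof: the authors instead pick a smooth homotopy $\varphi_t$ from $\varphi$ to $\psi$, use the homotopy lemma $\frac{d}{dt}\varphi_t^*\omega=\d(\varphi_t^*(X\intpr\omega))$ to compute
\[
\frac{d}{dt}\ip{\varphi_t^*\omega}{\varphi^*\omega}_{L^2}
=\ip{\varphi_t^*(X\intpr\omega)}{\delta\varphi^*\omega}_{L^2}=0,
\]
so that $2F(\varphi)=\ip{\varphi^*\omega}{\varphi^*\omega}_{L^2}=\ip{\psi^*\omega}{\varphi^*\omega}_{L^2}$, and then apply Cauchy--Schwarz. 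Immediately afterwards, though, the paper records exactly your Hodge-theoretic argument as an alternative proof in a Remark: $\varphi^*\omega$ is harmonic, hence the $L^2$-minimizer in its de Rham class, and $\psi^*\omega$ lies in that same class. So you have independently found the paper's ``second'' proof. The two approaches are really the same phenomenon viewed differently: the homotopy-lemma computation is precisely what shows $[\psi^*\omega]=[\varphi^*\omega]$ at the level of forms, and the Cauchy--Schwarz step is equivalent to the orthogonality of the exact part in the Hodge decomposition. Your version has the virtue of making the topological lower bound $\tfrac12\|\beta\|_{L^2}^2$ explicit; the paper's version is slightly more self-contained in that it avoids invoking the Hodge theorem.

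One small inaccuracy in your closing comment: the Hopf-map argument earlier in the paper is \emph{not} the same bound rephrased. There $H^2(S^3)=0$, so the cohomology class of $\varphi^*\omega$ is trivial and your bound would give only $F\ge 0$. The Hopf argument instead exploits the first eigenvalue of the Hodge Laplacian on coexact $1$-forms together with the Hopf invariant, which is genuinely extra information beyond $[\varphi^*\omega]$.
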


\begin{proof} 
Let $\varphi:M\ra N$ have $\varphi^*\omega$ coclosed, $\psi:M\ra N$
be homotopic to $\varphi$ and $\varphi_t$ be a smooth homotopy
from $\varphi$ to $\psi$. By the homotopy lemma \cite{Eells-Lemaire}
$$\frac{d}{dt}\ip{\varphi_t^*\omega}{\varphi^*\omega}_{L^2}=
\ip{\d(\varphi_t^*X\intpr\omega)}{\varphi^*\omega}_{L^2}=
\ip{\varphi_t^*X\intpr\omega}{\delta\varphi^*\omega}_{L^2}=0.$$  
Hence, by Cauchy-Schwartz,
$$F(\varphi)=\frac{1}{2}\ip{\psi^*\omega}{\varphi^*\omega}_{L^2}\leq\sqrt{F(\psi)F(\varphi)},$$
so that $$F(\varphi)\leq F(\psi).$$
\end{proof}

\begin{remark} Note that if $\delta\varphi^*\omega=0$ then
$\varphi^*\omega$ is harmonic, and thus minimizes the $L^2$-norm in its
cohomology class. If $\psi$ is homotopic to $\varphi$, then
$\psi^*\omega$ is in this cohomology class, and this gives an alternative
proof of the above result. 
\end{remark}

\begin{corollary} Any critical immersion from a compact Riemannian manifold 
to a symplectic manifold minimizes $F$ in its homotopy class.
\end{corollary}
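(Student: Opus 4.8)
The plan is to reduce the statement to the two results already in hand: the Euler--Lagrange equation of Theorem \ref{first} and the minimality criterion of Theorem \ref{phi}. The only genuinely new point is the observation that, for an immersion, criticality of $F$ is \emph{equivalent} to $\varphi^*\omega$ being coclosed.

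First I would invoke Theorem \ref{first}: a critical point $\varphi:M\ra N$ satisfies $\d\varphi(\musicd\delta\varphi^*\omega)=0$. Here $\delta\varphi^*\omega$ is a smooth $1$-form on $M$, so its sharp $\musicd\delta\varphi^*\omega$ is a smooth vector field on $M$, and $\d\varphi$ sends it to a smooth section of $\varphi^{-1}TN$. Since $\varphi$ is an immersion, the bundle map $\d\varphi_x:T_xM\ra T_{\varphi(x)}N$ is injective at every point $x\in M$; hence $\d\varphi(\musicd\delta\varphi^*\omega)=0$ forces $\musicd\delta\varphi^*\omega=0$ pointwise, and therefore $\delta\varphi^*\omega=0$. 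In other words, $\varphi^*\omega$ is coclosed. As $M$ is compact by hypothesis, Theorem \ref{phi} then applies directly and gives that $\varphi$ minimizes $F$ in its homotopy class.

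There is essentially no obstacle in this argument: the substance lies entirely in the two cited theorems, and the step that joins them is simply that injectivity of $\d\varphi$ allows one to cancel $\d\varphi$ in the critical point equation. The only thing worth a moment's care is that the vanishing is pointwise rather than merely $L^2$ or distributional, but this is immediate since $\varphi$, and hence every tensor built from it, is smooth.
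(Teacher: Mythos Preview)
Your argument is correct and is exactly the intended one: the paper states the corollary without proof precisely because it follows immediately by combining Theorem~\ref{first} with Theorem~\ref{phi}, using that injectivity of $\d\varphi$ forces $\delta\varphi^*\omega=0$ from the critical point equation. There is nothing to add.
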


Clearly, any symplectomorphism on a compact symplectic Riemannian manifold 
is a
minimizer in its homotopy class. In particular, we have
\begin{corollary} The identity map on a compact symplectic Riemannian 
manifold
minimizes $F$ in its homotopy class.
\end{corollary}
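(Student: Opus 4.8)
The plan is to reduce the statement immediately to Theorem~\ref{theorem:coclosed}. First I would observe that the identity map satisfies $\mathrm{id}^*\omega=\omega$, so that $F(\mathrm{id})=\tfrac12\|\omega\|_{L^2}^2$. To apply Theorem~\ref{theorem:coclosed} it then remains only to check that $\omega$ is coclosed. Since $M$ is symplectic we have $\d\omega=0$; and provided the Riemannian metric is adapted to $\omega$ — for instance a K\"ahler metric, for which $\omega$ is $\nabla$-parallel and hence harmonic — we also have $\delta\omega=0$. Thus $\mathrm{id}^*\omega$ is coclosed and Theorem~\ref{theorem:coclosed} gives the claim at once; equivalently, this is the special case $\varphi=\mathrm{id}$ of the preceding corollary on symplectomorphisms.

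Alternatively, one can argue directly as in the Remark above: a harmonic form minimizes the $L^2$-norm in its de~Rham cohomology class, and if $\psi$ is homotopic to $\mathrm{id}$ then $\psi^*\omega$ and $\omega=\mathrm{id}^*\omega$ are cohomologous, because homotopic maps induce the same homomorphism on $H^2_{\mathrm{dR}}(M)$. Hence $\|\psi^*\omega\|_{L^2}\geq\|\omega\|_{L^2}$, i.e. $F(\psi)\geq F(\mathrm{id})$, which is exactly the desired conclusion.

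The only real subtlety — and the reason the statement should carry the hypothesis that the metric is compatible with $\omega$ — is the coclosedness of $\omega$: for an arbitrary Riemannian metric on a symplectic manifold $\omega$ need not be harmonic, and then by Theorem~\ref{first} the identity map fails to be a critical point of $F$, since criticality of $\mathrm{id}$ forces $\musicd\delta\omega=0$. Once that compatibility is in place there is no genuine obstacle: the proof is a one-line invocation of Theorem~\ref{theorem:coclosed} (or of the harmonic-representative argument above).
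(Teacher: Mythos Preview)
Your reduction to Theorem~\ref{theorem:coclosed} is exactly the paper's approach: the paper simply remarks that any symplectomorphism on a compact symplectic Riemannian manifold has $\varphi^*\omega=\omega$ and hence minimizes $F$, and records the identity map as a special case. Your alternative via the harmonic representative is likewise the content of the Remark immediately following Theorem~\ref{theorem:coclosed}, so both of your routes are present in the paper.

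The point you raise about the hypothesis is well taken and is glossed over in the paper. As you correctly argue, for an arbitrary Riemannian metric on a symplectic manifold one need not have $\delta\omega=0$; and if $\delta\omega\neq 0$ then, since $\omega$ is nondegenerate, $\d\,\mathrm{id}(\musicd\delta\omega)=\musicd\delta\omega\neq 0$, so by Theorem~\ref{first} the identity fails even to be a critical point of $F$, let alone a minimizer. Thus the corollary, as stated, implicitly requires that $\omega$ be coclosed with respect to $g$ (as holds, for instance, when $g$ is an associated almost K\"ahler metric). With that compatibility understood, your proof and the paper's coincide.
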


We next prove that if $M$ is a compact Riemann surface, all critical
points of $F$ have $\varphi^*\omega$ coclosed.

\begin{theorem} Let $M$ be a compact, oriented, 2-dimensional Riemannian
manifold and $N$ be a symplectic manifold. Then every critical point
$\varphi:M\ra N$ of $F$ has $\varphi^*\omega$ coclosed (and hence
minimizes $F$ in its homotopy class).
\end{theorem}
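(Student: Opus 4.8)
The plan is to unwind the Euler--Lagrange equation from Theorem~\ref{first}, namely $\d\varphi(\musicd\delta\varphi^*\omega)=0$, exploiting the fact that on a surface $\varphi^*\omega$ is a form of top degree. First I would write $\varphi^*\omega=f\volume$, where $f:=\ast\varphi^*\omega$ is a smooth function on $M$ (orientability is used here), and compute $\delta\varphi^*\omega=-\ast\d\ast\varphi^*\omega=-\ast\d f$ (with the usual sign convention, $\delta=-\ast\d\ast$ on $2$-forms in dimension two). Hence $Z_\varphi:=\musicd\delta\varphi^*\omega$ is, up to sign, the metric dual of $\ast\d f$; since $\ast$ is a pointwise isometry on forms, $|Z_\varphi|=|\d f|$ at every point, so $Z_\varphi$ vanishes precisely on $\{\d f=0\}$. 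The criticality condition then reads $\d\varphi(Z_\varphi)=0$, an identity of sections of $\varphi^{-1}TN$.

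Next I would show that the open set $U:=\{p\in M:\d f_p\neq0\}$ is empty. Fix $p\in U$. Then $Z_\varphi(p)$ is a \emph{nonzero} vector lying in $\ker\d\varphi_p$, so $\operatorname{rank}\d\varphi_p\leq\dim T_pM-1=1$; since $\omega$ is a $2$-form and the image of $\d\varphi_p$ is at most one-dimensional, $\omega$ restricts to zero on it, i.e.\ $(\varphi^*\omega)_p=0$, and therefore $f(p)=0$. Thus $f\equiv0$ on $U$. But $U$ is open, so $f\equiv0$ on $U$ forces $\d f\equiv0$ on $U$, contradicting the definition of $U$ unless $U=\emptyset$. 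Hence $\d f\equiv0$ on $M$, and $\delta\varphi^*\omega=-\ast\d f=0$, which is the assertion; the parenthetical claim that $\varphi$ then minimizes $F$ in its homotopy class is immediate from Theorem~\ref{theorem:coclosed}.

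There is no real analytic obstacle: the argument is formal, and it uses $\dim M=2$ in exactly two places --- once to express $\delta\varphi^*\omega$ through the single scalar $f$, and once to turn ``$\ker\d\varphi_p$ contains a nonzero vector'' into ``$\operatorname{rank}\d\varphi_p\leq1$'', which is precisely what annihilates the $2$-form $\omega$. The only point demanding a little care is the sign of $\delta$ on top-degree forms, which in any case does not affect the conclusion. (It is worth recording that the proof yields a little more: $f$ is in fact locally constant, so on each connected component of $M$ the form $\varphi^*\omega$ is a constant multiple of $\volume$ --- either identically zero, or a nonzero constant, in which case $\varphi$ is automatically a symplectic immersion there, up to rescaling.)
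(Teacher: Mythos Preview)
Your proof is correct and follows essentially the same route as the paper: write $\varphi^*\omega=f\volume$, observe that criticality forces $f=0$ wherever $\d f\neq0$, and derive a contradiction on the open set $\{\d f\neq0\}$. The only cosmetic difference is that the paper obtains $f=0$ on this set by evaluating $(\varphi^*\omega)(J\nabla f,\nabla f)=-f|\nabla f|^2$ directly, whereas you phrase the same step as a rank argument; the logic is identical.
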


\begin{proof}
For any $\varphi:M\ra N$,
 $$\varphi^*\omega=f\volume,$$ where $\volume$ is the volume form on $M$ 
and $f$ is a real function on
$M$. Then $\delta\varphi^*\omega=\ast\d f$, so that
$$\musicd\delta\varphi^*\omega=J\nabla f,$$ where $J$ is the Hermitian
structure on $M$ associated with the orientation.  

Assume now that $\varphi$ is critical. Then $\d\varphi(J\nabla f)=0$
so that $$0=(\varphi^*\omega)(J\nabla f,\nabla f)=-f|\nabla f|^2.$$ 
It follows that $\nabla f=0$ everywhere, so $f$ is constant. 
(Assume, to the contrary, that $(\nabla f)(p)\neq 0$ for some $p\in M$. 
Then there
is some neighbourhood of $p$ on which $\nabla f\neq 0$. But then
$f=0$ on this neighbourhood, so $(\nabla f)(p)=0$, a contradiction.)
Hence $\varphi^*\omega$ is coclosed.  
\end{proof} 

For our next result, recall that a submersion $\varphi:(M,g)\to
(N,h,J)$ from a Riemannian manifold to an almost Hermitian manifold
gives rise to an $f$-structure $f$ on $M$, such that $\ker
f=\ker\d\varphi$ and the restriction of $f$ to $(\ker\d\varphi)^\perp$
corresponds to $J$ under the identification
$(\ker\d\varphi)^\perp\cong\varphi^*TN$ (see \cite{Rawnsley} for the definition of an $f$-structure). The map is said to be
\emph{pseudo horizontally (weakly) conformal} (\emph{PHWC}) if $f$ is
skew-symmetric with respect to the metric $g$ on $M$. In particular,
if $M$ also carries an almost Hermitian structure with respect to
which $\varphi$ is holomorphic, then $\varphi$ is \emph{PHWC}. See
e.g. \cite{Baird-Wood,Radu} for other characterizations of \emph{PHWC}
maps.  
\begin{proposition} Assume that $\varphi:(M,g)\to (N,h,J)$ is a PHWC
submersion from a Riemannian manifold to an almost Hermitian manifold,
with associated $f$-structure $f$. Then 
$$\delta\varphi^*\omega=f\dive
f\intpr\varphi^*\omega-\sum_ah(\varphi_*fe_a,\nabla\d\varphi(e_a,\
\cdot\ )),$$ where in the last term we sum over a local orthonormal frame for
$TM$. 
\end{proposition}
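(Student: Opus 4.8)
The plan is to compute $\delta\varphi^*\omega$ directly from the pointwise formula $\delta\beta=-\sum_a e_a\intpr(\nabla_{e_a}\beta)$, valid for any $2$-form $\beta$ and any local orthonormal frame $\{e_a\}$ of $TM$, feeding in the two defining algebraic features of a PHWC submersion: that $\varphi_*\circ f=J\circ\varphi_*$ as bundle maps $TM\to\varphi^{-1}TN$ — immediate from $\ker f=\ker\d\varphi$ together with the fact that $f$ represents $J$ on $(\ker\d\varphi)^\perp$ — and that $f$ is skew-symmetric with respect to $g$. Throughout I would use that $h$ is parallel for the pull-back connection $\nabla^\varphi$, and identify $\d\varphi(X)$ with $\varphi_*X$.

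First I would rewrite the $2$-form. Since $\omega(\cdot,\cdot)=h(J\cdot,\cdot)$, the identity $\varphi_*f=J\varphi_*$ gives $\varphi^*\omega(Y,Z)=h(\varphi_*fY,\varphi_*Z)$. Viewing the right-hand side as $h(A(Y),\d\varphi(Z))$, where $A:=\d\varphi\circ f$ is the $\varphi^{-1}TN$-valued $1$-form $Y\mapsto\varphi_*(fY)$, a Leibniz expansion of $(\nabla_{e_a}\varphi^*\omega)(e_a,X)$ — in which the $\nabla_{e_a}e_a$ contributions recombine into the covariant derivatives of $A$ and $\d\varphi$ — gives
$$(\delta\varphi^*\omega)(X)=-\sum_a h\bigl((\nabla_{e_a}A)(e_a),\d\varphi(X)\bigr)-\sum_a h\bigl(\varphi_*fe_a,\nabla\d\varphi(e_a,X)\bigr),$$
using $(\nabla_{e_a}\d\varphi)(X)=\nabla\d\varphi(e_a,X)$. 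The second sum is already the second term of the proposition, so the task reduces to identifying the first.

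Next I would expand $\sum_a(\nabla_{e_a}A)(e_a)$. From $(\nabla_{e_a}A)(e_a)=\nabla^\varphi_{e_a}(\varphi_*(fe_a))-\varphi_*(f\nabla_{e_a}e_a)$ and $\nabla^\varphi_{e_a}(\d\varphi(fe_a))=\nabla\d\varphi(e_a,fe_a)+\d\varphi\bigl((\nabla_{e_a}f)(e_a)+f\nabla_{e_a}e_a\bigr)$, the terms in $f\nabla_{e_a}e_a$ cancel and one is left with
$$\sum_a(\nabla_{e_a}A)(e_a)=\sum_a\nabla\d\varphi(e_a,fe_a)+\varphi_*(\dive f),\qquad\dive f:=\sum_a(\nabla_{e_a}f)(e_a).$$
This is the point at which the PHWC hypothesis is used decisively: $\nabla\d\varphi$ is symmetric, $f$ is skew-symmetric, and the full contraction of a symmetric $2$-tensor against a skew one vanishes, so $\sum_a\nabla\d\varphi(e_a,fe_a)=0$. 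Hence the first sum above equals $-h(\varphi_*(\dive f),\d\varphi(X))$, and one final application of $\varphi_*f=J\varphi_*$ together with $J^2=-\id$ rewrites this as $\omega(\varphi_*(f\,\dive f),\varphi_*X)=(f\,\dive f\intpr\varphi^*\omega)(X)$. Combining the two pieces yields the stated formula.

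This is essentially a direct computation; the only moment of genuine content is the vanishing of $\sum_a\nabla\d\varphi(e_a,fe_a)$, which is precisely where "PHWC" enters, everything else being routine manipulation with $\nabla^\varphi$. The one point needing attention is to carry all the sign conventions consistently — the codifferential $\delta=-\sum_a e_a\intpr\nabla_{e_a}$, the definition $\dive f=\sum_a(\nabla_{e_a}f)(e_a)$, and the interior-product convention $(V\intpr\varphi^*\omega)(X)=\varphi^*\omega(V,X)$ — since, because $f$ is skew, changing any one of them flips the sign of the first term of the identity.
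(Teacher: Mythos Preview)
Your proof is correct and follows essentially the same route as the paper: both expand $\delta\varphi^*\omega$ via the pointwise formula, rewrite $\varphi^*\omega(X,Y)=h(\varphi_*fX,\varphi_*Y)$, and arrive at the same three terms, with the second already in final form. The only cosmetic difference is in dispatching $\sum_a\nabla\d\varphi(e_a,fe_a)$: you invoke the abstract ``symmetric contracted with skew vanishes'' argument, whereas the paper phrases the same idea by comparing the sum over an adapted horizontal frame $\{e_i\}$ with the sum over the rotated frame $\{fe_i\}$.
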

\begin{proof} For any vector field $Y$ on $M$ we have 
\begin{equation*}
\begin{split}
\delta\varphi^*\omega(Y)=&
-\sum_a\big(e_a\varphi^*\omega(e_a,Y)-\varphi^*\omega(\nabla_{e_a}e_a,Y)
-\varphi^*\omega(e_a,\nabla_{e_a}Y)\big)\\ 
=&-\sum_a\big(h(\nabla_{e_a}\varphi_*fe_a-\varphi_*f\nabla_{e_a}e_a,\varphi_*Y)
+h(\varphi_*fe_a,\nabla\d\varphi(e_a,Y))\big)\\ 
=&-\sum_ah(\nabla\d\varphi(e_a,fe_a),\varphi_*Y)-h(\varphi_*\dive f,\varphi_*Y)\\
&-\sum_ah(\varphi_*fe_a,\nabla\d\varphi(e_a,Y))
\end{split}
\end{equation*} 
To see that the first term is zero, we can choose the local
orthonormal frame such that $e_1,\dots,e_n$ span $\ker\d\varphi=\ker
f$, while $e_{n+1},\dots,e_m$ span the orthogonal complement of
$\ker\d\varphi$. Evaluating the sum for the two local orthonormal
frames $\{e_1,\dots,e_n,e_{n+1},\dots,e_m\}$ and
$\{e_1,\dots,e_n,fe_{n+1},\dots,fe_m\}$, shows that this amounts to
zero. Furthermore, the second term equals $f\dive
f\intpr\varphi^*\omega(Y)$, and the proof is finished.  
\end{proof}
Assume that $\varphi:(M,g)\to(N,h)$ is a submersion between two
Riemannian manifolds, and denote by $\V$ the \emph{vertical}
distribution $\ker\d\varphi$ and by $\H$ the \emph{horizontal}
distribution $\V^\perp$ on $M$. Recall that $\varphi$ is said to be
\emph{horizontally conformal} if there is a positive function
$\lambda$ on $M$, the dilation of $\varphi$, such that
$$h(\varphi_*X,\varphi_*Y)=\lambda^2g(X,Y)\qquad(X,Y\in\Gamma(\H)).$$
If $\lambda$ is constant, 
$\varphi$ is said to be \emph{horizontally homothetic}. Clearly, any
horizontally conformal submersion to an almost Hermitian manifold is
\emph{PHWC}. 
\begin{corollary}\label{aci}
 Assume that $\varphi:(M,g)\to (N,h,J)$ is a
horizontally homothetic submersion from a Riemannian manifold to an
almost Hermitian manifold. Then
$$\delta\varphi^*\omega=-\lambda^2\musicu\dive f,$$ where $\lambda$ is
the dilation of $\varphi$ and $f$ the associated $f$-structure. 
\end{corollary}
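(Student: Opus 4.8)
The plan is to specialise the formula of the preceding Proposition to the horizontally homothetic case and show that the second term collapses. First I would recall that for a horizontally homothetic submersion the dilation $\lambda$ is a positive constant, and that $\varphi$ is in particular PHWC, so the Proposition applies and gives
$$\delta\varphi^*\omega=f\dive f\intpr\varphi^*\omega-\sum_a h(\varphi_*fe_a,\nabla\d\varphi(e_a,\ \cdot\ )).$$
So it suffices to prove two things: that the first term equals $-\lambda^2\musicu\dive f$, and that the second term vanishes identically.

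For the first term, I would work out $f\dive f\intpr\varphi^*\omega$ by choosing a frame adapted to the splitting $TM=\V\oplus\H$, note that $\dive f\in\Gamma(\H)$ (since $fe_a=0$ on the vertical part), and use that on horizontal vectors $\varphi^*\omega(X,Y)=\omega(\varphi_*X,\varphi_*Y)=h(J\varphi_*X,\varphi_*Y)$. Since $f$ restricted to $\H$ corresponds to $J$ under $\H\cong\varphi^*TN$ and $\varphi$ scales the horizontal metric by $\lambda^2$, one gets $\varphi^*\omega(fX,Y)=\lambda^2 g(X,Y)$ for horizontal $X,Y$; applying this with $X=\dive f$ rewrites $f\dive f\intpr\varphi^*\omega$ as the $1$-form $\lambda^2 g(\dive f,\ \cdot\ )$ up to sign, i.e.\ $\pm\lambda^2\musicu\dive f$. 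Tracking the sign conventions from the Proposition's proof should produce exactly $-\lambda^2\musicu\dive f$.

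For the vanishing of the second term I would exploit horizontal homothety through the behaviour of the second fundamental form $\nabla\d\varphi$. The key point is that a horizontally homothetic submersion with constant dilation has $\nabla\d\varphi(X,Y)=0$ whenever $X,Y$ are both horizontal — equivalently, the horizontal distribution behaves well enough that the only surviving contributions to $\sum_a h(\varphi_*fe_a,\nabla\d\varphi(e_a,Y))$ come from terms that pair a horizontal $fe_a$ against $\nabla\d\varphi(e_a,Y)$ with $e_a$ horizontal, and these vanish. Concretely, $\nabla\d\varphi(X,Y)$ for horizontal $X,Y$ is controlled by the derivative of $\lambda$ and by the tensor measuring non-integrability of $\H$; constancy of $\lambda$ kills the first, and the remaining antisymmetric integrability part is annihilated after contracting with $\varphi_*fe_a=J\varphi_*e_a$ and summing over the frame, by the same frame-swapping trick ($e_a\leftrightarrow fe_a$ on $\H$) used in the Proposition. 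I expect this vanishing of the $\nabla\d\varphi$ term to be the main obstacle: it requires care with the standard integrability/O'Neill-type decomposition of $\nabla\d\varphi$ restricted to $\H$, and with verifying that the $J$-contracted sum of the integrability term is symmetric while the term itself is antisymmetric (or vice versa), forcing it to zero. Once that is established, combining it with the first-term computation yields $\delta\varphi^*\omega=-\lambda^2\musicu\dive f$, completing the proof.
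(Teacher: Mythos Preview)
Your decomposition into ``first term equals $-\lambda^2\musicu\dive f$'' and ``second term vanishes identically'' is the source of the gap, because neither claim is true on its own; only their sum gives the stated formula.

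For the second term, your argument that only horizontal $e_a$ contribute (since $fe_a=0$ for vertical $e_a$) is fine, but then you need $\nabla\d\varphi(e_a,Y)=0$, and the horizontally-homothetic fact only gives this when $Y$ is \emph{also} horizontal. If $Y$ is vertical the term does not vanish: since $\varphi_*Y=0$ one has $\nabla\d\varphi(e_a,Y)=-\varphi_*\nabla_{e_a}Y$, and a short computation (using that $fe_a$ is horizontal and $g(fe_a,Y)=0$) shows
\[
-\sum_a h(\varphi_*fe_a,\nabla\d\varphi(e_a,Y))
=\lambda^2\sum_a g(fe_a,\nabla_{e_a}Y)
=-\lambda^2\sum_a g(\nabla_{e_a}fe_a,Y)
=-\lambda^2 g(\dive f,Y).
\]
So for vertical $Y$ it is precisely the second term that produces the answer.

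Correspondingly, your claim (1) also fails on vertical $Y$: the first term $f\dive f\intpr\varphi^*\omega$ evaluated on vertical $Y$ is $\omega(\varphi_*f\dive f,\varphi_*Y)=0$, whereas $-\lambda^2 g(\dive f,Y)$ need not vanish. Your assertion that $\dive f\in\Gamma(\H)$ (which would rescue this) is unjustified---the reason you give, that $fe_a=0$ on vertical vectors, does not imply that $\sum_a(\nabla_{e_a}f)e_a$ is horizontal.

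The fix is exactly what the paper does: treat horizontal and vertical $Y$ separately. For horizontal $Y$ the second term vanishes (by $\nabla\d\varphi|_{\H\times\H}=0$, which is a clean fact for horizontally homothetic submersions---your O'Neill/frame-swapping discussion is unnecessary here) and the first term gives $-\lambda^2 g(\dive f,Y)$; for vertical $Y$ the first term vanishes and the second gives $-\lambda^2 g(\dive f,Y)$ as above.
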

\begin{proof} It is well known that $\nabla\d\varphi(X,Y)=0$ when $X$
and $Y$ are horizontal. Thus, if $Y$ is horizontal, we have
$$\delta\varphi^*\omega(Y)=h(J\varphi_*f\dive
f,\varphi_*Y)=-\lambda^2g(\dive f,Y).$$ On the other hand, if $Y$ is
vertical, then  
\begin{equation*}
\begin{split}
\delta\varphi^*\omega(Y)=&-\sum_ah(\varphi_*fe_a,\nabla\d\varphi(e_a,Y))
=\sum_ah(\varphi_*fe_a,\varphi_*\nabla_{e_a}Y)\\ 
=&-\lambda^2\sum_ag(\nabla_{e_a}fe_a,Y)=-\lambda^2g(\dive f,Y). 
\end{split}
\end{equation*}
\end{proof}

We will use this result to construct a family of homogeneous projections
into Hermitian symmetric spaces, all of which minimize $F$ in their homotopy
class.
Let $\g^\cn$ be a semi-simple complex Lie
algebra, $\g$ be a compact real form of $\g^\cn$ and
$\h\subset\g$ be a Cartan subalgebra with complexification
$\h^\cn\subset\g^\cn$. Denote by $\Delta=\Delta^+\cup\Delta^-$ the set
of roots and its decomposition into positive and negative roots, after
a choice of a positive Weyl chamber. Let $\Pi\subset\Delta^+$ be the
set of simple roots. For any subset $\Pi_0\subset \Pi$, we can
construct a parabolic subalgebra as
$$\p_0=\h^\cn+\sum_{\alpha\in[\Pi_0]}\g_\alpha+
\sum_{\alpha\in\Delta^+\setminus[\Pi_0]}\g_\alpha,$$ where $[\Pi_0]$
is the set of roots in the span of $\Pi_0$.

Let $G^\cn$ be a complex Lie group with Lie algebra $\g^\cn$,
and $G$, $P_0$ and $K_0$ be Lie subgroups of $G^\cn$ with Lie
algebras $\g$, $\p_0$ and $\k_0=\g\cap\p_0$, respectively. Let $\m_0$
be the Killing orthogonal complement to $\k_0$, so that
$$\m_0=\sum_{\alpha\in\Delta^+\setminus[\Pi_0]}(\g_\alpha+\g_{-\alpha})\cap\g.$$
We may identify $\m_0$ with the tangent space of $G/K_0$ at the
identity coset. Now $G/K_0$ has a $G$-invariant integrable complex
structure, for which the $(1,0)$ and $(0,1)$ spaces at the identity
coset are
$$\m_0^{1,0}=\sum_{\alpha\in\Delta^+\setminus[\Pi_0]}\g_\alpha, \quad
\m_0^{0,1}=\sum_{\alpha\in\Delta^+\setminus[\Pi_0]}\g_{-\alpha}.$$ It
is well known that minus the Killing form of $\g$ equips
$G/K_0$ with a Riemannian metric for which this complex structure is
Hermitian and cosymplectic (i.e.\ the K\"ahler form is {\em coclosed}),
see e.g. \cite{Svensson}. 

Now, for any nested pair of subsets $\Pi_0\subset\Pi_0'\subset\Pi$ of simple
roots, we get, with obvious notation, a homogeneous fibration
$$\varphi:G/K_0\to G/K_0',$$ and this map is clearly a holomorphic
Riemannian submersion with totally geodesic fibres, see e.g.,
\cite[page 257]{Besse}.
Hence, in the case where $G/K_0'$ is K\"ahler, $\varphi$ is a critical
point of $F$. 
 From now on, we will assume that $G/K_0'$ is a
Hermitian symmetric space, so that the complex structure is in fact
K\"ahler. 

\begin{proposition} With the notation and conventions introduced above,
the homogeneous fibration $\varphi:G/K_0\ra G/K_0'$ minimizes $F$
in its homotopy class.
\end{proposition}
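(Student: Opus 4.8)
The plan is to deduce this from Theorem~\ref{theorem:coclosed}. Since $\g$ is a compact real form, $G$ is compact and hence so is $G/K_0$; thus it suffices to show that $\varphi^*\omega$ is coclosed. Now $\varphi:G/K_0\to G/K_0'$ is a Riemannian submersion: its differential at the identity coset is the Killing-orthogonal projection $\m_0\to\m_0'$ (note $\m_0'\subset\m_0$, because $\Pi_0\subset\Pi_0'$ forces $[\Pi_0]\subset[\Pi_0']$), and this restricts to the identity on the horizontal space $\m_0'$. In particular $\varphi$ is horizontally homothetic with dilation $\lambda\equiv1$, so Corollary~\ref{aci} applies and gives $\delta\varphi^*\omega=-\musicu\dive f$, where $f$ is the $f$-structure associated with $\varphi$. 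Everything therefore reduces to proving $\dive f=0$.

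The key point is homogeneity. The map $\varphi$ is $G$-equivariant, the K\"ahler form $\omega$ on $G/K_0'$ is $G$-invariant, and the metric on $G/K_0$ induced by minus the Killing form is $G$-invariant; hence the associated $f$-structure, the Levi-Civita connection, and therefore $\dive f$ are all $G$-invariant. Consequently $\dive f$ is a $G$-invariant vector field on $G/K_0$, and is determined by its value at the identity coset $o$, which must lie in the space of $\Ad(K_0)$-fixed vectors in $T_o(G/K_0)\cong\m_0$.

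It remains to see that $\m_0$ has no nonzero $\Ad(K_0)$-fixed vector. Since $\h^\cn\subset\p_0$ we have $\h\subset\k_0$, so $H\subset K_0$, and by construction $\m_0=\sum_{\alpha\in\Delta^+\setminus[\Pi_0]}(\g_\alpha+\g_{-\alpha})\cap\g$ is a direct sum of real root spaces attached to \emph{nonzero} roots. On each such summand $\Ad(\exp h)$, for $h\in\h$ with $\alpha(h)\neq0$, acts as a nontrivial rotation and hence fixes no nonzero vector; therefore $(\m_0)^{\Ad K_0}\subseteq(\m_0)^{\Ad H}=0$. Thus $\dive f=0$, so $\varphi^*\omega$ is coclosed, and Theorem~\ref{theorem:coclosed} yields that $\varphi$ minimizes $F$ in its homotopy class.

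There is essentially no hard step here once the machinery preceding the statement is in place: the only genuine input is the elementary representation-theoretic fact that $\m_0$ carries no trivial $\Ad(H)$-summand, everything else being the routine verification that $\varphi$ is a Riemannian submersion and the tracking of $G$-invariance. Indeed one can bypass Corollary~\ref{aci} altogether and argue directly that $\delta\varphi^*\omega$ is itself a $G$-invariant $1$-form on $G/K_0$, hence corresponds to an $\Ad(K_0)$-fixed element of $\m_0^*\cong\m_0$, which vanishes for the same reason; from this viewpoint the $f$-structure computation is a consistency check rather than a necessity. If one wished instead to avoid the homogeneity shortcut, the alternative would be to expand $\dive f$ in the root-space basis and cancel terms using the $B$-orthogonality of distinct root spaces and the invariance of the Killing form, but the invariant-vector-field argument makes this unnecessary.
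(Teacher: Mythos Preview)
Your argument is correct and arrives at $\dive f=0$ by a genuinely different route from the paper. Both proofs invoke Theorem~\ref{theorem:coclosed} and Corollary~\ref{aci} to reduce to $\dive f=0$, and both exploit $G$-invariance to check this only at the identity coset. The paper then computes directly: using the Levi-Civita connexion formula for a reductive homogeneous space it verifies $\ip{\nabla_XY}{Z}=0$ whenever $X,Y\in\g_{\pm\alpha}$ and $Z\in\g_{\pm\beta}$, and since $f$ preserves each real root space $(\g_\alpha+\g_{-\alpha})\cap\g$, it follows that $\ip{(\nabla_Xf)X}{Z}=0$ term by term in the root-space decomposition of $\m_0$. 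Your argument instead observes that a $G$-invariant vector field on $G/K_0$ is determined by an $\Ad(K_0)$-fixed vector in $\m_0$, and since $\h\subset\k_0$ and $\m_0$ decomposes into real root spaces for nonzero roots, no such fixed vector exists. This is shorter and more conceptual, and as you note it applies equally well directly to the $G$-invariant $1$-form $\delta\varphi^*\omega$, bypassing the $f$-structure entirely; the paper's computation, on the other hand, makes the vanishing explicit at the level of the connexion and would adapt more readily to situations where $\m_0$ does carry $\Ad(K_0)$-fixed vectors but $\dive f$ still vanishes for structural reasons.
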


\begin{proof}
It is, by Theorem \ref{phi}, Corollary \ref{aci} and $G$-invariance, 
enough to show that
$\dive f=0$ at the identity coset. It follows easily from the
formula for the Levi-Civita connection 
in a reductive homogeneous space, see \cite[page 183]{Besse}, that
$$\ip{\nabla_XY}{Z}=0,$$ for all $X,Y\in\g_{\pm\alpha}$ and
$Z\in\g_{\pm\beta}$, for any $\alpha,\beta\in\Delta^+$.
Since $f$ is $G$-invariant and preserves the subspace
$(\g_\alpha+\g_{-\alpha})\cap\g$ of $\m_0$, we have
$$\ip{(\nabla_Xf)X}{Z}=\ip{\nabla_XfX}{Z}-\ip{f\nabla_XX}{Z}=0,$$ for
all $X\in\g_{\pm\alpha}$ and $Z\in\g_{\pm\beta}$, for any
$\alpha,\beta\in\Delta^+$. From the above orthogonal decomposition of
$\m_0$, we see that $\dive f=0$.
\end{proof}

\begin{example} Let $n_1,\dots,n_k$ be positive integers, and
$n=n_1+\dots+n_k$. Define $M$ as the space of
decompositions $$\cn^n=V_1\oplus\dots\oplus V_k,$$ where $\dim
V_i=n_i$, and $V_i\perp V_j$ whenever $i\neq j$. If we denote by
$Gr_{n_1}(\cn^n)$ the Grassmannian of $n_1$-dimensional subspaces of
$\cn^n$, we have an obvious map $$\varphi:M\to Gr_{n_1}(\cn^n),\
\varphi(V_1\oplus\dots\oplus V_k)=V_1.$$ By considering both $M$ and
$Gr_{n_1}(\cn^n)$ as homogeneous $SU(n)$-spaces, it follows easily
that $\varphi$ is a homogenous projection of the type considered 
above, and hence a minimizer of $F$ in its homotopy class.  
\end{example}

\begin{remark} Let $\varphi:M\ra N$ be a holomorphic map between almost
Hermitian manifolds. It is well known that if $M$ and 
$N$ are almost K\"ahler then $\varphi$ is harmonic and minimizes 
the Dirichlet energy $E$ in its
homotopy class, see e.g., \cite{Eells-Lemaire}
for a proof. In fact, this proof
immediately generalizes to the case where $M$ is only cosymplectic.
Hence, if, in addition, $\varphi^*\omega$ is coclosed, we see that
$\varphi$ minimizes the full Faddeev-Hopf energy $E+\alpha F$ for all
$\alpha>0$. This applies to the homogeneous fibrations
$G/K_0\ra G/K_0'$ defined above.
\end{remark}

\end{document}